\documentclass{amsart}
\usepackage{amsthm,graphicx,amssymb}
\usepackage[usenames]{color} 
\usepackage{colortbl}
\usepackage{caption}
\usepackage{subcaption}
\usepackage{hyperref}

\usepackage{lineno}
\usepackage{tikz}
\usepackage{tikz-qtree}
\usetikzlibrary{arrows}
\usetikzlibrary{calc}

\theoremstyle{definition}
 \newtheorem{theorem}{Theorem}
 \newtheorem{proposition}{Proposition}
 \newtheorem{definition}{Definition}
 
 \newtheorem{lemma}{Lemma}

\newcommand\ZZ{\mathbb Z}
\newcommand\NN{\mathbb N}
\newcommand\G{\Gamma}

\begin{document}

\title{Sandpiles on the heptagonal tiling}

\author[N. Kalinin, M. Shkolnikov]{Nikita Kalinin, Mikhail Shkolnikov}

\thanks{
Research is supported in part the grant 159240 of the Swiss National Science
Foundation as well as by the National Center of Competence in Research
SwissMAP of the Swiss National Science Foundation.  
}

\address{Universit\'e de Gen\`eve, Section de
  Math\'ematiques, Route de Drize 7, Villa Battelle, 1227 Carouge, Switzerland}

\keywords{self-organized criticality, sandpiles, hyperbolic tilling}
\email{Nikita.Kalinin\{at\}unige.ch \hfil\break mikhail.shkolnikov\{at\}gmail.com}

\maketitle

\begin{abstract}
We study perturbations of the maximal stable state in a sandpile model on the set of faces of the heptagonal tiling on the hyperbolic plane. An explicit description for relaxations of such states is given.
\end{abstract}
{\fontfamily{calligra}\selectfont \begin{small}  Dedicated to Sergei Vassilyevich Duzhin, who was a great lector and teacher -- that is why this article is about sandpiles, one of his last courses, and passionate experimenter, who loved concrete calculations -- that is why this article is about experimental results. \end{small}}

\section{Introduction}

 \begin{figure}[h]
    \centering
    \setlength\fboxsep{0pt}
    \setlength\fboxrule{0.5pt}
    \includegraphics[width=0.6\textwidth]{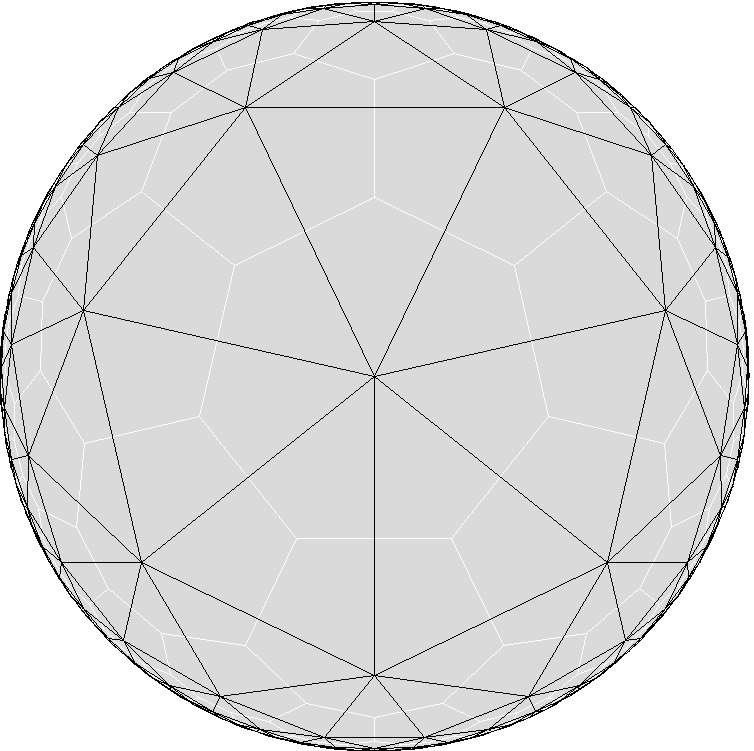}\ 

    \caption{The triangular tiling (black edges) and the dual heptagonal tiling (white edges). The hyperbolic plane is represented by the Klein disk model.} 
    
    \label{fig_tiling}
\end{figure}

Consider a M\"obius triangle $\Delta\subset \mathbb{H}^2$ whose angles are all  equal ${2\pi\over 7}.$ Note that such a triangle is unique up to an isometry of $\mathbb{H}^2$. Let $G$ be the subgroup of $\mathop{Aut}(\mathbb{H}^2)$ generated by three reflections with respect to the sides of $\Delta.$ Action of $G$ on $\Delta$ produces the regular triangular tiling of $\mathbb{H}^2$ (see \cite{MagnusTriangle} and Figure \ref{fig_tiling}). Consider a graph $\Gamma$ whose set of vertices (edges) consists of translations by $G$ of the vertices (edges) of $\Delta.$  Note that $\Gamma$ is a regular graph, all its vertices have degree $7$.

The dual graph of $\Gamma$ can be embedded to $\mathbb{H}^2$ by sending its vertices to the centers of the corresponding triangles and connecting the centers of adjacent triangles with a geodesic. This procedure defines the regular heptagonal tiling. We think of this tiling as the closest hyperbolic analog of the hexagonal honeycomb tiling of Euclidean plane.
    
Choose a vertex $O$ of the graph $\Gamma.$ For any vertex of $\Gamma$ denote by $L(v)$ the distance from $v$ to $O$ in $\Gamma,$ i.e. $L(v)$ is the minimal length of a chain of edges connecting $O$ and $v.$ For $m\in\NN$ denote by $\Gamma_m$ the set of all vertices $v$ in $\Gamma$ such that $L(v)\leq m.$  

For any $v\in\Gamma_m$ consider {\it a toppling} operator $T_v$ on the set of all integer-valued functions on $\Gamma_m,$ i.e. $$T_v\phi (u)=\phi(u)+\Delta\delta_v (u),$$ for $\phi\colon \Gamma_m\rightarrow \ZZ$ and $u\in\Gamma_m.$ Here we denote by $n(v)$ the set of all $7$ neighbors of $v$ in $\Gamma$ and $\delta_v$ is a function on $\Gamma$ sending $v$ to $1$ and vanishing at all other vertices. The discrete Laplacian operator $\Delta$ is given by $$\Delta f(v)=-7 f(v)+\sum_{u\in n(v)\cap \Gamma_m} f(u),$$ where $f$ is a function on $\Gamma.$

A {\it state} of the sandpile model on $\Gamma_m$ is a non-negative integer-valued function on $\Gamma_m.$ We interpret a state as a distribution of sand grains (or chips) on $\Gamma_m.$ A toppling $T_v$ is called {\it legal} for a state $\phi$ if $T_v\phi$ is a state, in other words $\phi(v)\geq 7.$ A state $\phi$ is called {\it stable} if there are no legal topplings for $\phi,$ i.e. $\phi(v)<7$ for any $v\in\Gamma_m.$ 

Consider a state $\psi_0.$ A sequence of states $\psi_0,\dots,\psi_N$ is called {\it a relaxation} of $\psi_0$ if $\psi_N$ is a stable state and for $i=0,\dots,N-1$ the state $\psi_{i+1}$ is the result of applying a legal toppling to $\psi_i.$ It is a basic statement (see \cite{Dhar,LP}) in general theory of abelian sandpiles that for any state $\psi_0$ there exist a relaxation and the resulting stable state $\psi_N$ depends only on $\psi_0$ and doesn't depend on a particular choice of a relaxation. Thus, we denote by $\psi_0^\circ$ the resulting state of any relaxation sequence of $\psi_0.$

Denote by $\mathcal{T}_{\psi_0}$ a function on $\Gamma_m$ counting the number of topplings at $v$ in a relaxation of $\psi_0.$ This function is called the {\it toppling function} (or odometer) of $\psi_0$ and for a given relaxation sequence $\psi_0,\dots,\psi_N$ can be expressed as $$\mathcal{T}_{\psi_0}=\sum_{i=1}^N\delta_{v_i},$$ where $\psi_{i+1}=T_{v_i}\psi_i$ and $v_i\in\Gamma_m.$ The toppling function doesn't depend on a choice of a relaxation sequence (see \cite{Dhar,FLP}). Clearly, the result of the relaxation for $\psi$ can be expressed in terms of toppling function as $\psi^\circ=\psi+\Delta T_\psi.$

The first version of a sandpile model was introduced in \cite{BTW} where a piece of a standard square lattice was used instead of $\Gamma_m.$ The model was generalized to arbitrary graphs in \cite{Dhar}, the generalization essentially coincides with {the chip-firing game}, well known in combinatorics  (see \cite{Mer}). The sandpile model was extensively studied in the case of embedded graphs including the classical square lattice and other tilings of Euclidean plane (see for example \cite{ATDNMAR, BTW, CPS, CPS2, Dhar,FLP,BR,LPS,LP,PS, PP}). To our knowledge, sandpiles on hyperbolic graphs has never been described in the literature. Our goal is to find the analogues of our recent results (see \cite{us}) about sandpiles on Euclidean lattices in the case of a hyperbolic tilling. The results established in this paper demonstrate that the study of sandpiles on hyperbolic graphs in general might be easier than on Euclidean ones. In Section \ref{discussion} we discuss possible generalizations.

\section{Statement of the results}
\begin{definition}
\label{def_maximal}
The {\it maximal stable state} $\phi_m$ on $\Gamma_m$ is the state given by $\phi_m(v)=6$ for all $v\in\Gamma_m.$ For a non-empty subset $P$ of $\Gamma_m$ consider a perturbation $\phi_m^P$ of the maximal stable state given by $$ \phi_m^P=\phi_m+\sum_{v\in P}\delta_v.$$  
\end{definition}

 \begin{figure}[b]
    \centering
    \setlength\fboxsep{0pt}
    \setlength\fboxrule{0.5pt}
    \includegraphics[width=0.8\textwidth]{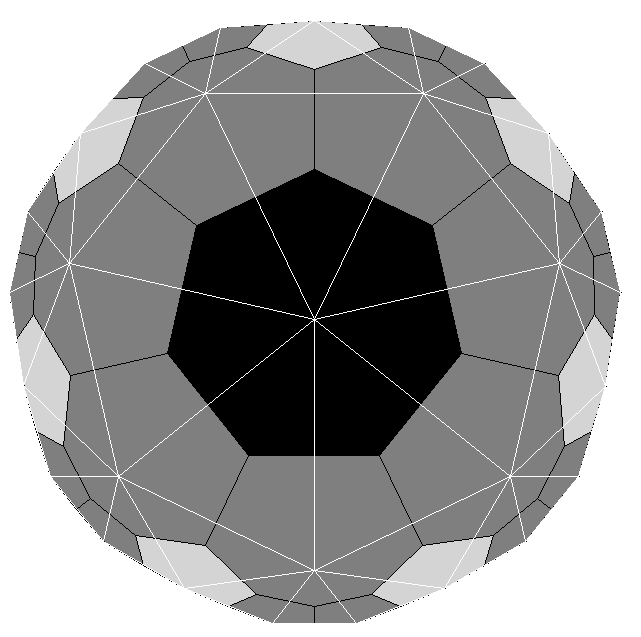}\ 

    \caption{The state $\beta_m^P$ for $m=2$ and $P=\{O\}.$  The black cell is dual to the vertex $O.$ The colors on the dual tiling represent numbers of sand at each vertex of $\Gamma_2$. The black cell represents $0,$ dark and light grey cells represent $3$ and $5$ sand grains respectively.} 
    
    \label{fig_depth21}
\end{figure}

We will describe the result of the relaxation $\beta_m^P=(\phi_m^P)^\circ$ of $\phi_m^P$ and the toppling function $\mathcal{T}_{\phi_m^P}.$ Consider a special case of the problem when $P=\{O\}.$ It is easy visible that the states $\beta_2^{P}$ (see Figure \ref{fig_depth21}) and  $\beta_6^{P}$ (left side of the \ref{fig_depth61}) coincide on $\Gamma_2$. We can also observe self-similar branches on the right side of Figure \ref{fig_depth61}. Though it is hard to express this fractality symbolically, we will give an easy combinatorial recipe for constructing the state $\beta_m^P$ without performing the actual relaxation (this is done in Proposition \ref{prop_thestate} and Definition \ref{def_univ}). 

 \begin{figure}
    \centering
    \setlength\fboxsep{0pt}
    \setlength\fboxrule{0.5pt}
    \includegraphics[width=0.45\textwidth]{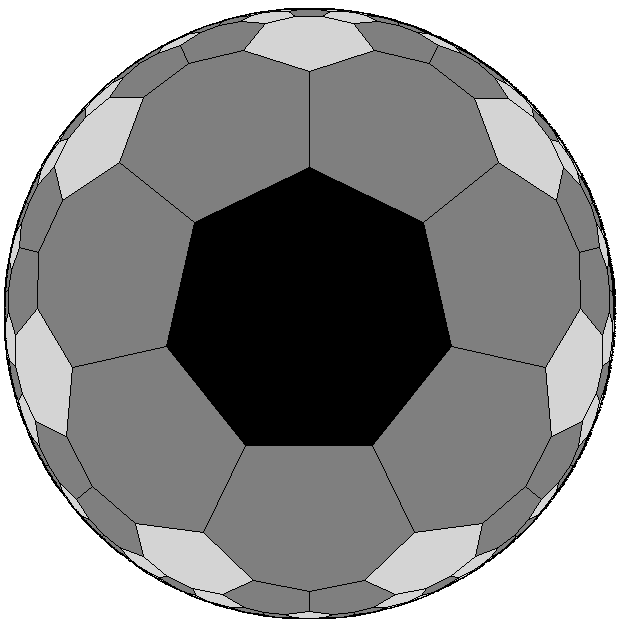}\ 
    \includegraphics[width=0.45\textwidth]{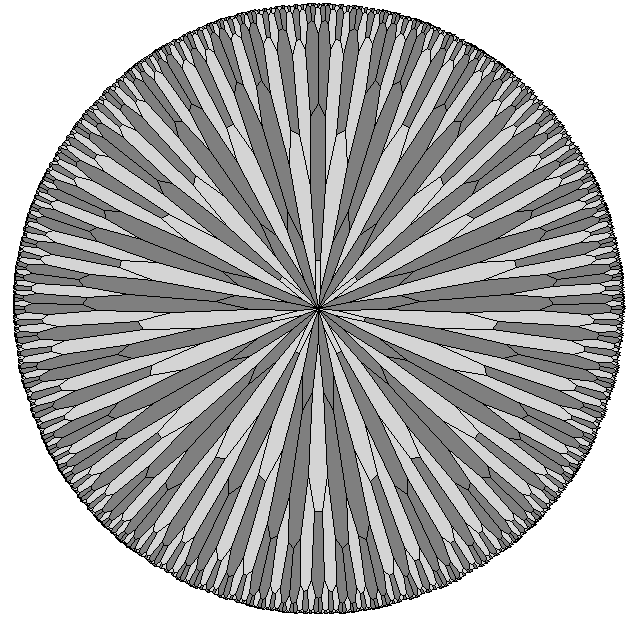}\ 
    \caption{Two pictures of the state $\beta_6^{\{O\}}:$ in the standard Klein model and after a homothety with ratio $0.005.$ } 
    
    \label{fig_depth61}
\end{figure}  

\begin{definition}For a state $\phi$ denote by $M\phi$ the {\it mass} of the state given by $$M\phi=\sum_{v\in \Gamma_m}\phi(v).$$
\end{definition}

The quantity $M\phi-M\phi^\circ$ represents the total number of sand grains that leave the system during the relaxation of $\phi.$ It appears that the state $\phi_m^P$ looses a substantial amount of sand after its relaxation and this amount doesn't depend on the configuration of points $P.$ 

\begin{proposition}\label{prop_mass}
For all $m>0$ there exist a constant $C_m>0$ such that $$C_m=M\phi_m^P-M\beta_m^P$$ is the same for all non-empty $P\subset\Gamma_m.$ Moreover, $$\lim_{m\to\infty}{ {C_m}\over{|\Gamma_m|}}=\sqrt{5}.$$
\end{proposition}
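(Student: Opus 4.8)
The plan is to turn the mass loss into a sum over the outer boundary of $\Gamma_m$ and then read off the asymptotics from the exponential growth of the successive layers. First I would sum the identity $\beta_m^P=\phi_m^P+\Delta\mathcal{T}_{\phi_m^P}$ over all of $\Gamma_m$. Because the graph is undirected, for any $f$ one has $\sum_{v\in\Gamma_m}\Delta f(v)=-\sum_{u\in\Gamma_m}f(u)\,(7-\deg_{\Gamma_m}u)$, where $\deg_{\Gamma_m}u=|n(u)\cap\Gamma_m|$. Every $u$ with $L(u)<m$ has all seven neighbours inside $\Gamma_m$, so the weight $7-\deg_{\Gamma_m}u$ is supported on the last layer $\partial\Gamma_m=\{L=m\}$, and I obtain
\[
C_m=M\phi_m^P-M\beta_m^P=\sum_{u\in\partial\Gamma_m}\mathcal{T}_{\phi_m^P}(u)\,(7-\deg_{\Gamma_m}u).
\]
Thus $C_m$ is governed entirely by the odometer on the outermost layer.

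For the independence of $P$ I would invoke the explicit combinatorial construction of the relaxation from Proposition \ref{prop_thestate} and Definition \ref{def_univ}. The point is that this construction realises $\mathcal{T}_{\phi_m^P}$ as a self-similar pattern whose restriction to $\partial\Gamma_m$ does not depend on $P$: the perturbation is ``forgotten'' long before the toppling front reaches the last layer. Granting this, the boundary sum above is the same function of $u$ for every nonempty $P$, which is precisely the asserted independence. Equivalently, and this is a useful sanity check, the abelian property gives $C_m^{P\cup\{w\}}-C_m^P$ equal to the mass dissipated when one extra grain is dropped on the already relaxed state $\beta_m^P$, so independence amounts to saying that a single further grain triggers an avalanche confined to the interior; supplying this confinement is exactly the role of the explicit recipe.

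Next I would control $|\Gamma_m|$ and the boundary weights through a layer recursion. Each layer $\{L=k\}$ is a combinatorial cycle, so every vertex has exactly two neighbours inside its own layer; calling a vertex type A or type B according to whether it has two or one neighbour in the previous layer, the counts satisfy $A_{k+1}=A_k+B_k$ and $B_{k+1}=A_k+2B_k$. The transition matrix with rows $(1,1)$ and $(1,2)$ has characteristic equation $\mu^2-3\mu+1=0$, hence Perron eigenvalue $\mu=(3+\sqrt5)/2$ and limiting ratio $B_k/A_k\to\mu-1=(1+\sqrt5)/2$; consequently $A_k/a_k\to1/\mu$ and $B_k/a_k\to1-1/\mu$ where $a_k=A_k+B_k$. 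Summing the geometric growth gives $|\Gamma_m|=\sum_{k\le m}a_k\sim\frac{\mu}{\mu-1}\,a_m=\frac{1+\sqrt5}{2}\,a_m$, using $\mu^2=3\mu-1$.

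Finally I would combine these. On $\partial\Gamma_m$ the weight $7-\deg_{\Gamma_m}u$ equals $4$ at type-B and $3$ at type-A vertices, so $\sum_{u\in\partial\Gamma_m}(7-\deg_{\Gamma_m}u)=3A_m+4B_m\sim(4-1/\mu)\,a_m=\frac{5+\sqrt5}{2}\,a_m$. Using the explicit description once more to see that the odometer is asymptotically equal to $1$ on the last layer (each boundary cell topples essentially once), the boundary sum for $C_m$ is asymptotic to $3A_m+4B_m$, whence
\[
\frac{C_m}{|\Gamma_m|}\longrightarrow\frac{(5+\sqrt5)/2}{(1+\sqrt5)/2}=\frac{5+\sqrt5}{1+\sqrt5}=\sqrt5.
\]
The genuine obstacle throughout is the structural input about the odometer on $\partial\Gamma_m$, both its $P$-independence and its asymptotic value $1$, which is what the self-similar recipe of Proposition \ref{prop_thestate} must rigorously deliver; the remaining growth estimates and the golden-ratio eigenvalue computation are routine.
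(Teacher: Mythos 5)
Your proof is correct and follows essentially the same route as the paper: the loss is concentrated on the outermost layer with weight $4$ per first-type and $3$ per second-type vertex (your Green's-identity derivation of the boundary sum is a repackaging of the paper's count of grains leaving during the first wave), and your Perron eigenvalue $\mu=(3+\sqrt{5})/2=\rho^2$ reproduces the paper's Fibonacci asymptotics $a_m=7u_{2m-1}$, $b_m=7u_{2m-2}$. One small sharpening: by Theorem \ref{th_thefunction} the odometer equals exactly $1$ on the layer $\{L=m\}$ (since $L(P)\leq m$), which is what yields the exact, not merely asymptotic, independence of $C_m$ from $P$.
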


See page \pageref{proof_prop_mass} for the proof. In particular, we see that in average a element of $\Gamma_m$ looses more than two grains during the relaxation even if $|P|=1$. Thus, adding to $P$ more points produces no unstable cites and does not change the picture outside of the added points. This property conceptually explains  the existence of a universal ``solution'' producing all the states $\beta_m^P.$ 

Recall that $L(v)$ denotes the distance from $v$ to $O$ in $\Gamma$. We define $L(P)=\min_{p\in P}L(p).$ 
\begin{proposition}\label{prop_thestate}
There exist a family $\alpha_0,\alpha_1,\alpha_2,\dots$ of integer-valued functions on the set of vertices of $\Gamma$ with the following properties 
\begin{itemize}
\item $\alpha_0$ and $\alpha_s$ coincide outside $\Gamma_s;$
\item $\alpha_{s}$ is equal to $6$ on $\Gamma_{s-1};$
\item  for any $m\in\NN,$ non-empty $P\subset\Gamma_m$ and $s=L(P)$ \begin{equation*}\begin{aligned}\beta^P_m(v)= &\alpha_{s}(v),\text{ for } v\in\Gamma_m\backslash P,\\
\beta^P_m(p)=& \alpha_{s}(p)+1,\text{ for } p\in P. 
\end{aligned}
\end{equation*}
\end{itemize}

\end{proposition}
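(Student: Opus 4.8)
The plan is to reduce the general statement to the case of a single perturbation point and then to analyze that case through the self-similar structure of $\Gamma$, using throughout the abelian property of relaxations and the least action principle: the odometer $\mathcal{T}_\phi$ is the pointwise smallest $u\colon\Gamma_m\to\ZZ_{\ge 0}$ with $\phi+\Delta u\le 6$ (see \cite{FLP}).

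First I would set up the reduction. Write $P=\{p_1,\dots,p_k\}$ with $L(p_1)=s=L(P)$, so that every $p_i$ satisfies $L(p_i)\ge s$. By the abelian property one may relax in stages (\cite{Dhar}), hence $\beta_m^P=\bigl(\beta_m^{\{p_1\}}+\sum_{i\ge 2}\delta_{p_i}\bigr)^\circ$. If I can show that the single-site relaxation $\beta_m^{\{p_1\}}$ takes value at most $5$ at every vertex $v$ with $L(v)\ge s$, then adding the remaining grains at the distinct vertices $p_2,\dots,p_k$ (each already carrying at most $5$) triggers no legal toppling, so $\beta_m^P=\beta_m^{\{p_1\}}+\sum_{i\ge2}\delta_{p_i}$. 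Setting $\alpha_s:=\beta_m^{\{p_1\}}-\delta_{p_1}$ this is exactly the claimed formula $\beta_m^P=\alpha_s+\delta_P$, provided $\alpha_s$ so defined is independent of $m$ and of the chosen $p_1$. Thus the whole proposition rests on the single-site analysis together with two facts about it: the bound $\alpha_s\le 5$ on $\{L\ge s\}$, and the independence of $\alpha_s$ of $p_1$ and its consistency in $m$.

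For the single-site case I would exploit two structural features. The first is symmetry: the stabilizer of $O$ acts on $\Gamma_m$, and, more importantly, $\Gamma$ admits a self-similar decomposition into isomorphic rooted branches hanging off each vertex in the outward direction, which is the combinatorial source of the fractal pictures in Figure \ref{fig_depth61}. The second is monotonicity of the odometer in the domain: for $\Gamma_m\subset\Gamma_{m'}$ the least action principle gives $\mathcal{T}_{\phi_{m'}^{\{p\}}}\ge \mathcal{T}_{\phi_m^{\{p\}}}$ on $\Gamma_m$. I would then prove the two structural properties of $\alpha_s$ by induction: that $\mathcal{T}_{\phi_m^{\{p\}}}$ vanishes on $\Gamma_{s-1}$, so the avalanche never disturbs the strictly inner ball and $\alpha_s\equiv 6$ there; and that the toppling wave enters each outward branch according to one and the same local rule regardless of whether the initial grain sat at the centre or at distance $s$, so that $\alpha_0$ and $\alpha_s$ agree outside $\Gamma_s$ and the pattern on $\Gamma_m$ does not depend on $m$ (matching the observed coincidence of $\beta_2^{\{O\}}$ and $\beta_6^{\{O\}}$ on $\Gamma_2$). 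Concretely I would isolate a local recurrence for the odometer across a branching vertex and verify, via the least action principle, that the candidate profile it produces is stable and minimal; uniqueness of the odometer then pins down $\alpha_s$, yielding simultaneously its independence of $p_1$ and of $m$ and the bound $\alpha_s\le 5$ on $\{L\ge s\}$ from the explicit local values (the entries $0,3,5$ visible in Figure \ref{fig_depth21}).

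The main obstacle is precisely this local recurrence together with the confinement statement. Proving that the self-sustaining avalanche on the near-critical state $\phi_m$ propagates only outward, sweeping everything at distance $\ge s$ down to values $\le 5$ while leaving $\Gamma_{s-1}$ untouched, requires controlling how many grains a distance-$(s-1)$ vertex receives from its toppling outer neighbours and showing the net inflow is zero. I expect this to be the delicate point: it is where the non-amenable geometry of $\Gamma$ is essential and where a careful bookkeeping of the toppling counts along each branch, organised by the self-similar decomposition, has to be carried out. Once the branchwise recurrence is established, the global statements, including the consistency in $m$ and the reduction above, follow formally.
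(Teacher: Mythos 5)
Your reduction of the general $P$ to a single point $p_1$ with $L(p_1)=L(P)$ is exactly the paper's argument: relax $\phi_m+\delta_{p_1}$ first, check that the result is at most $5$ everywhere outside $\Gamma_{s-1}$, and conclude that the remaining grains at $p_2,\dots,p_k$ sit stably. The gap is in your single-site analysis, specifically in the confinement claim. You assert that $\mathcal{T}_{\phi_m^{\{p\}}}$ vanishes on $\Gamma_{s-1}$ and that the avalanche ``propagates only outward.'' This is false, and in fact self-contradictory: a vertex $v$ with $L(v)=s-1$ is adjacent to a toppling vertex at level $s$, starts at $6$, receives at least one grain, and must therefore topple. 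By Theorem \ref{th_thefunction} the true odometer is $\min(m+1-L(v),m+1-s)$, which equals $m+1-s\geq 1$ on all of $\Gamma_{s-1}$; every vertex of $\Gamma_m$ topples at least once, and the inner ball topples the most. The state returns to $6$ on $\Gamma_{s-1}$ not because nothing happens there but because each interior vertex receives back from its seven neighbours exactly as many grains as it emits. Your picture of a locally confined, outward-moving avalanche is the Euclidean subcritical intuition; here the perturbation of the maximal stable state triggers a global avalanche. Consequently the ``local recurrence across a branching vertex'' that you plan to verify against the least action principle would be built around the wrong candidate odometer (zero on the inner ball, supported outward), and the verification would fail at the first vertex of $\Gamma_{s-1}$ adjacent to level $s$.

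The mechanism the paper actually uses, and which your proposal does not reach, is the wave decomposition: the relaxation of $\phi_m+\delta_p$ splits into $m+1-L(p)$ waves, the $k$-th of which topples every vertex of $\Gamma_{m-k+1}$ exactly once, and the key identity $W_O^2\phi_m=W_O\phi_{m-1}$ on $\Gamma_{m-1}$ makes each successive wave reproduce the first wave on a smaller ball. This is what produces the explicit values $2,3,5,6$ of $\alpha_s$ in Definition \ref{def_univ}, and in particular the bound $\alpha_s\leq 5$ outside $\Gamma_{s-1}$ that your own reduction step requires. Note also that, unlike the final state, the odometer depends strongly on $m$ (it grows linearly in $m$ at every fixed vertex), so your claim that ``the pattern on $\Gamma_m$ does not depend on $m$'' can only be salvaged for the state, not for the toppling counts.
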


Proposition \ref{prop_thestate} is a consequence of the following 

\begin{theorem}\label{th_thefunction}
For any non-empty $P\subset\Gamma_m$ 
$$\mathcal{T}_{\phi_m^P}(v)=\min(m+1-L(v),m+1-L(P)).$$
\end{theorem}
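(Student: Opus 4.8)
The plan is to prove the equivalent statement $\mathcal{T}_{\phi_m^P}(v)=m+1-\max(L(v),L(P))$ by establishing two inequalities, using the \emph{least action principle} for abelian sandpiles (see \cite{FLP}): the odometer is the pointwise-minimal function $u\colon\Gamma_m\to\ZZ$, $u\ge 0$, for which $\phi_m^P+\Delta u$ is stable. Write $s=L(P)$ and set $T(v)=m+1-\max(L(v),s)$; since $P\subset\Gamma_m$ and $v\in\Gamma_m$ both arguments of the maximum are at most $m$, so $T\ge 1$ everywhere.

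For the upper bound $\mathcal{T}_{\phi_m^P}\le T$ it suffices, by the least action principle, to check that $\phi_m^P+\Delta T$ is a stable state. First I would compute $\Delta T$ from the breadth-first structure of $\Gamma$ rooted at $O$: for a vertex $v$ with $L(v)=\ell$ write $a(v)$ for the number of neighbours at distance $\ell-1$ (parents) and $c(v)$ for the number at distance $\ell+1$ (children), the remaining $7-a-c$ neighbours lying at distance $\ell$. Since $T=(m+1)-\max(L,s)$ is locally affine in $L$ away from the level $\ell=s$, a short computation gives $\Delta T(v)=0$ for $L(v)<s$, $\Delta T(v)=-c(v)$ for $L(v)=s$, and $\Delta T(v)=a(v)-c(v)$ for $L(v)>s$ (the same expression $a-c$ holding on the boundary $L(v)=m$, where the uncounted exterior children exactly account for the missing terms in the Laplacian). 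Hence $\phi_m^P+\Delta T=6+\delta_P+\Delta T$, where $\delta_P=\sum_{v\in P}\delta_v$, takes values in $\{0,\dots,6\}$ — and in particular is stable — precisely when $c(v)>a(v)$ for every $v$ with $L(v)\ge 1$.

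The crux of the upper bound is therefore the geometric statement that in $\Gamma$ \emph{every vertex at distance $\ge 1$ from $O$ has strictly more children than parents}. I would prove this by analysing the combinatorics of the order-$7$ triangular tiling $\{3,7\}$: the neighbours of $O$ form a $7$-cycle (each level-$1$ vertex has $a=1$, $c=4$), and inductively each vertex has either one or two parents, the triangulation forcing correspondingly many children; a case analysis on vertex type yields $c\ge a+1$ in all cases. This is the step I expect to be the main obstacle, as it requires a precise description of the BFS layers of the hyperbolic tiling.

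For the lower bound $\mathcal{T}_{\phi_m^P}\ge T$ I would first reduce to the case $|P|=1$. Indeed, the odometer is monotone in the initial state, so if $p\in P$ with $L(p)=s$ then $\mathcal{T}_{\phi_m^{\{p\}}}\le\mathcal{T}_{\phi_m^P}$; since the upper bound already gives $\mathcal{T}_{\phi_m^P}\le T$ for every such $P$, it is enough to show $\mathcal{T}_{\phi_m^{\{p\}}}\ge T$ for a single vertex $p$ at distance $s$, after which all the inequalities collapse to equalities simultaneously — which also explains the independence of the answer from $P$. For a single source I would exhibit an explicit legal relaxation in which $v$ topples exactly $T(v)$ times: firing in $m+1-s$ successive sweeps, where sweep $t$ topples, in order of increasing $L$, every vertex with $T(v)\ge t$. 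Since the toppling counts of any legal sequence are dominated by the odometer, realizing the vector $T$ legally yields $\mathcal{T}\ge T$. Verifying that each scheduled firing is legal (the vertex holds at least $7$ grains at that moment), by tracking the intermediate states with the same parent/child bookkeeping, is the delicate point; when $s=0$ the full $7$-fold symmetry of $\phi_m^{\{O\}}$ makes the odometer radial and reduces legality to a one-dimensional recursion in $L$, and I would bootstrap from this symmetric case to $s\ge 1$ using vertex-transitivity of $\Gamma$ together with the monotonicity reduction above.
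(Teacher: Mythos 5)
Your two halves have different relationships to the paper. The upper bound is a genuinely different route: the paper never invokes the least action principle, but instead computes the result of each successive wave explicitly (obtaining the states $\alpha_s$ of Definition \ref{def_univ}) and reads the odometer off from the wave count. Your candidate-odometer computation is correct: with $T=m+1-\max(L,s)$ one gets $\Delta T=0$ below level $s$, $-c(v)$ at level $s$, and $a(v)-c(v)$ above it, and the needed combinatorial input (type-$1$ vertices have $a=1$, $c=4$; type-$2$ vertices have $a=2$, $c=3$, so $c>a$ always) is exactly what the paper establishes in its Section on the combinatorics of $\Gamma$ via the recursion $a_{m+1}=2a_m+b_m$, $b_{m+1}=a_m+b_m$. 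The resulting values $6+a-c$ and $6+[v\in P]-c$ even reproduce the paper's $\alpha_s$, so your upper bound simultaneously verifies Proposition \ref{prop_thestate}; this is arguably cleaner than the paper's argument. Your reduction of the lower bound to a single source via monotonicity of the odometer, combined with the upper bound holding for all $P$, is also sound and replaces the paper's abelian-property step (relax $\phi_m+\delta_{p_0}$ first, then observe the remaining grains of $P$ land on sites with at most $5$ grains and cause no topplings).

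The lower bound for a single source is where the gaps are, and it is essentially the paper's wave decomposition in disguise (sweep $t$ topples exactly $\Gamma_{m+1-t}$, i.e.\ is the $t$-th wave). Two concrete problems. First, your schedule ``in order of increasing $L$'' is illegal whenever $p\neq O$: the first vertex fired in sweep $1$ would be $O$, which holds only $6$ grains. The sweeps must propagate outward from $p$, i.e.\ each sweep should fire the vertices of $\Gamma_{m+1-t}$ in order of increasing graph distance from $p$ \emph{within} $\Gamma_{m+1-t}$, so that every vertex has already received a grain from a fired neighbour by its turn; one must also check that $p$ again holds $7$ grains at the start of each subsequent sweep (it does, since $p$ lies in the interior of $\Gamma_{m+2-t}$ for $t\le m+1-s$ and so recovers all $7$ grains it emitted, plus the extra $\delta_p$). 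Second, the proposed bootstrap from the symmetric case $s=0$ ``using vertex-transitivity of $\Gamma$ together with the monotonicity reduction'' does not work as stated: vertex-transitivity of $\Gamma$ does not preserve the ball $\Gamma_m$ centred at $O$, so the relaxation of $\phi_m+\delta_p$ on $\Gamma_m$ is not isomorphic to that of $\phi_m+\delta_O$, and monotonicity gives no comparison between $\phi_m+\delta_O$ and $\phi_m+\delta_p$ since neither dominates the other. Drop the bootstrap and run the corrected sweep argument directly for arbitrary $p$: the bookkeeping is uniform in $p$ because after sweep $t$ the state is $6$ on the interior $\Gamma_{m-t}$ plus $\delta_p$, and strictly below $6$ on the outer levels, which is precisely the paper's observation that $W_O^{k}\phi_m$ coincides with $\alpha_{m-k+1}$.
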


In other words, the toppling function essentially doesn't depend on the number and the position of points $P$ and depends only on the distance from $O$ to $P.$ Theorem \ref{th_thefunction} and Proposition \ref{prop_thestate} are proved on the page \pageref{proof_th_thefunction}.

\section{Combinatorics of $\Gamma$}
The set of vertices of $\Gamma$ is a disjoint union of the sets $\Gamma_m^\bullet=\Gamma_m\backslash\Gamma_{m-1},$  $m\in\mathbb{Z}_{\geq 0}.$  We call $\Gamma_m^\bullet$ the $m$-th level of $\Gamma.$ For any pair of integers $(m,k)$ we consider the set $\Gamma_m^k$ given by $$\Gamma_m^k=\{v\in\Gamma_m^\bullet: |n(v)\cap\Gamma_{m-1}^\bullet|=k\}.$$ It is easy to see that $\Gamma_0=\Gamma_0^0=\{O\}$ and $\Gamma_m^\bullet=\Gamma_m^1\cup\Gamma_m^2$ for $m>0.$

 \begin{figure}[h]
 \resizebox{0.95\textwidth}{!}{
\begin{tikzpicture}[>=stealth',shorten >=1pt,auto,node distance=3cm,
  thick,tnode/.style={circle,fill=white!20,draw,font=\sffamily}]

  \node[tnode] (1) {1};
  \node[tnode] (2) [left of=1] {};

    \node[tnode] (3) [right of=1] {};
     \node[tnode] (4) [below of=1] {};
   \node[tnode] (8) [above left of=3] {1};
      \node[tnode] (5) [above right of=1] {2};
        \node[tnode] (6) [above right of=2] {1};
         \node[tnode] (7) [above left of=1] {2};
         
         \node[tnode] (10)[right of=3] {};
         
         \node[tnode] (11)[right of=10] {2};
  \node[tnode] (12) [left of=11] {};
    \node[tnode] (13) [right of=11] {};
     \node[tnode] (14) [below of=12] {};
      \node[tnode] (19) [below of=13] {};

      \node[tnode] (15) [above right of=11] {2};
    
         \node[tnode] (17) [above left of=11] {2};
    
        \node[tnode] (16) at ($(15)!0.5!(17)$) {1};

  \path[every node/.style={font=\sffamily\small}]
    (1) edge (4)
        edge (2)
        edge (3)
        edge (5)
        edge (6)
        edge (7)
        edge (8)
    (4) edge (2)
        edge (3)
    (4) edge (2)
    (7) edge (2)
        edge (6)
    (5) edge (3)
        edge (8)
    (6) edge (8)
   
   (11) edge (14)
        edge (12)
        edge (13)
        edge (15)
        edge (17)
   (14) edge (12)
    (14) edge (19)
    (19) edge (11)
        edge (13)
    (17) edge (12)
    (15) edge (13)
    (16) edge (15)
         edge (17)
         edge (11);
    
   \path[draw, dotted]
    (17) edge node {$\Gamma_{m+1}^\bullet$} (5)
    (3) edge node {$\Gamma_{m}^\bullet$} (12)
    (4) edge node {$\Gamma_{m-1}^\bullet$} (14);

\end{tikzpicture}}
  \caption{Vertices of first and second types. A type of a vertex is written in the circle, if it can be determined using the picture.} 
    \label{fig_types1}
    
\end{figure}
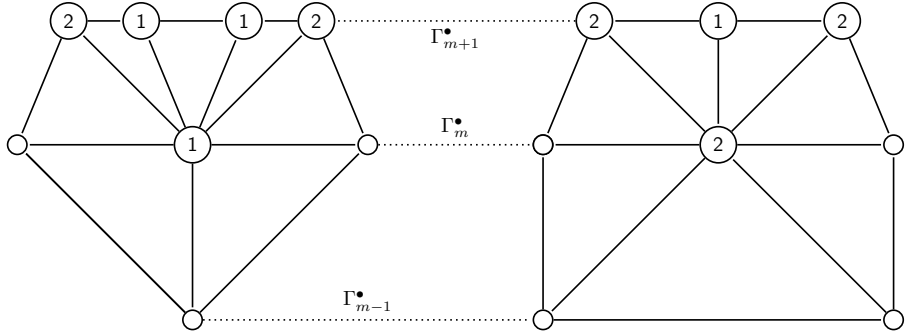

We distinguish three types of vertices. The {\it zeroth} type consist of unique vertex $O.$ The {\it first} type consists of vertices connected by a single edge with a previous level. And a vertex of the {\it second} type is connected with the previous level by two edges (see Figure \ref{fig_types1}).

We are going to compute the numbers $a_m=|\Gamma_m^1|$ and $b_m=|\Gamma_m^2|$ of vertices of the first and second types at $m$-th level. It is clear that $b_1=0,$ $a_1=7,$ 
\begin{equation*}
\begin{aligned}
b_{m+1}=& a_m+b_m \text{\ \ and }\\
a_{m+1}=& 2a_m+b_m=b_{m+1}+a_m.
\end{aligned}
 \end{equation*}
 Therefore, the sequence $b_1,a_1,b_2,a_2,b_3,a_3,\dots$ coincides with the Fibonacci sequence multiplied by $7.$ We have proven the following

\begin{lemma}\label{lem_fibab}
For any $m>0$ $$b_m=7u_{2m-2}\text{\ and\ }a_m=7u_{2m-1},$$ where $u_0=0,u_1=1$ and $u_{m+2}=u_{m+1}+u_m.$
\end{lemma}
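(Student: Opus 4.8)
The plan is to establish the two recurrences $b_{m+1}=a_m+b_m$ and $a_{m+1}=2a_m+b_m$ stated just before the lemma, and then solve them in closed form in terms of the Fibonacci numbers $u_k$. First I would verify the recurrences combinatorially by a careful local analysis at a single vertex. The key structural fact, read off from Figure \ref{fig_types1}, is that each vertex of $\Gamma$ has degree $7$, and that for a vertex $v$ at level $m$ the edges partition into those going to $\Gamma_{m-1}^\bullet$, those staying within $\Gamma_m^\bullet$, and those going to $\Gamma_{m+1}^\bullet$. For a first-type vertex ($k=1$) exactly one edge descends, and for a second-type vertex ($k=2$) exactly two edges descend; counting the remaining edges and how they ascend to the next level is what produces the recurrences.

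Concretely, I would argue that each first-type vertex at level $m$ contributes, at level $m+1$, one new second-type vertex and two new first-type vertices arising as its ``ascending'' neighbors, while each second-type vertex contributes one second-type and one first-type vertex. Summing these contributions over the $a_m$ first-type and $b_m$ second-type vertices at level $m$ gives $b_{m+1}=a_m+b_m$ and $a_{m+1}=2a_m+b_m$ after accounting for the sharing of second-type vertices between adjacent ascending vertices. I would double-check the base case $b_1=0$, $a_1=7$: the root $O$ has seven neighbors, all at level $1$, and none of these are joined to a lower level by two edges, so all seven are first-type. The main subtlety here is the combinatorial bookkeeping of shared vertices: a second-type vertex at level $m+1$ is adjacent to two vertices at level $m$, so one must be careful not to double-count it when summing contributions over level $m$; getting this overlap correct is the crux of deriving the coefficient $2$ in $a_{m+1}=2a_m+b_m$.

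Once the recurrences hold, the algebra is routine. Adding them gives $a_{m+1}=a_m+b_{m+1}$, which together with $b_{m+1}=a_m+b_m$ shows that the interleaved sequence
\begin{equation*}
b_1,\,a_1,\,b_2,\,a_2,\,b_3,\,a_3,\dots
\end{equation*}
satisfies the Fibonacci recurrence: each term is the sum of the two preceding ones. Since this sequence starts $0,7,7,14,21,\dots$, i.e.\ $7$ times $0,1,1,2,3,\dots$, it equals $7u_k$ with the indexing $b_m=7u_{2m-2}$ and $a_m=7u_{2m-1}$. I would confirm the index alignment against the base values: $b_1=7u_0=0$ and $a_1=7u_1=7$, matching, and then conclude by induction using the verified recurrences.

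I expect the only genuine obstacle to be the geometric/combinatorial step of justifying the recurrences from the tiling structure rather than the purely arithmetic passage to the closed form. In particular, one must rely on the fact that $\Gamma_m^\bullet=\Gamma_m^1\cup\Gamma_m^2$ for $m>0$ (no vertex descends by three or more edges), which itself follows from the local geometry of the triangular tiling where all vertices have degree $7$ and consecutive levels are joined in the pattern visible in Figure \ref{fig_types1}; making this local pattern precise and exhaustive is the step requiring the most care.
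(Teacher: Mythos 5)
Your proposal is correct and follows essentially the same route as the paper: the paper likewise asserts the base case $b_1=0$, $a_1=7$ and the recurrences $b_{m+1}=a_m+b_m$, $a_{m+1}=2a_m+b_m=b_{m+1}+a_m$, and then identifies the interleaved sequence $b_1,a_1,b_2,a_2,\dots$ with $7$ times the Fibonacci sequence. The only difference is that the paper treats the recurrences as clear from Figure \ref{fig_types1}, whereas you spell out the local counting (including the halving of shared second-type vertices), which is a reasonable elaboration rather than a different argument.
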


Now we can compute the number of elements in $\Gamma_m.$

\begin{lemma}\label{lem_total}
$|\Gamma_m|=7u_{2m+1}-6.$
\end{lemma}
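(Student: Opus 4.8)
The plan is to count $\Gamma_m$ level by level, exploiting the partition of the vertex set into the levels $\Gamma_k^\bullet$ introduced just above. Since $\Gamma_m=\bigsqcup_{k=0}^m\Gamma_k^\bullet$ is a disjoint union, I would write $|\Gamma_m|=\sum_{k=0}^m|\Gamma_k^\bullet|$. The zeroth level contributes $|\Gamma_0^\bullet|=1$, while for each $k\ge 1$ the decomposition $\Gamma_k^\bullet=\Gamma_k^1\cup\Gamma_k^2$ into vertices of the first and second type gives $|\Gamma_k^\bullet|=a_k+b_k$. Substituting the values from Lemma \ref{lem_fibab} and collapsing the result with the defining Fibonacci relation $u_{2k-1}+u_{2k-2}=u_{2k}$, I obtain $|\Gamma_k^\bullet|=7u_{2k-1}+7u_{2k-2}=7u_{2k}$ for every $k\ge 1$.

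It then remains to evaluate $|\Gamma_m|=1+7\sum_{k=1}^m u_{2k}$, so the heart of the argument is the identity $\sum_{k=1}^m u_{2k}=u_{2m+1}-1$ for the even-indexed Fibonacci numbers. I would prove this by telescoping: from $u_{2k+1}=u_{2k}+u_{2k-1}$ one has $u_{2k}=u_{2k+1}-u_{2k-1}$, whence $\sum_{k=1}^m u_{2k}=\sum_{k=1}^m(u_{2k+1}-u_{2k-1})=u_{2m+1}-u_1=u_{2m+1}-1$. Plugging this in yields $|\Gamma_m|=1+7(u_{2m+1}-1)=7u_{2m+1}-6$, as claimed. (Alternatively, the whole identity could be obtained by a one-line induction on $m$, but the telescoping route makes the cancellation transparent.)

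There is essentially no serious obstacle here: once Lemma \ref{lem_fibab} is available, the computation is purely formal. The only point requiring care is the bookkeeping at the boundary, namely treating the apex $O$ — the zeroth level, which is of neither the first nor the second type — separately from the levels $k\ge 1$, and applying the index shift in the summation identity correctly. A check against small cases confirms both the formula and the indexing: $|\Gamma_0|=1=7u_1-6$, and $|\Gamma_1|=1+7u_2=8$ agrees with $7u_3-6=14-6=8$, matching the direct count of $O$ together with its seven neighbours.
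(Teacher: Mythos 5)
Your proof is correct and follows essentially the same route as the paper: both count level by level via $|\Gamma_m|=1+\sum_{k=1}^m(a_k+b_k)$ and then telescope, the only cosmetic difference being that you substitute the Fibonacci values from Lemma \ref{lem_fibab} first and telescope the identity $u_{2k}=u_{2k+1}-u_{2k-1}$, whereas the paper telescopes directly with the recurrences $a_k+b_k=b_{k+1}=a_{k+1}-a_k$ before substituting. The sanity checks at $m=0,1$ are a nice addition.
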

  
\begin{proof}
Indeed, the number $|\Gamma_m|$ is equal to $$1+\sum_{i=1}^m(a_i+b_i)=1+\sum_{i=1}^mb_{i+1}=1+\sum_{i=1}^m(a_{i+1}-a_i)=1+a_{m+1}-a_1.$$  
\end{proof}

\section{Wave action}

Consider a vertex $p\in\Gamma_m.$ Let $\phi$ be a state on $\Gamma_m.$ The ``wave action'' $W_p\phi$ at $p$ on $\phi$ is defined as follows. 
\begin{definition}\label{def_wave}
If $\phi(p)=6$ and there exists $v\in\Gamma_m$ such that $v$ is adjacent to $p$ and $\phi(v)=6$ then
$W_p\phi=(T_vT_p\phi)^\circ.$ Otherwise, we define $W_p\phi$ to be $\phi.$
\end{definition}

It is easy to show that the wave action doesn't depend on the choice of $v$ adjacent to $p.$ Clearly, the states $W_v\phi$ and $W_p\phi$ coincide. Therefore, for a given chain of vertices $p_1,\dots,p_N\in\Gamma_m$ such that $\phi(p_i)=6$ and $p_{i+1}\in n(p_i)$ all the states $W_{p_i}\phi$ coincide.

One can use a sequence of waves to decompose the relaxation of the state $\phi+\delta_p.$ Indeed, one can verify that $(\phi+\delta_p)^\circ=(W_p\phi(v)+\delta_p)^\circ.$ Therefore, for a sufficiently large $N$ either $W^N_p\phi(p)=W_p(W_p(\dots \phi)\dots)(p)<6$ and  $$(\phi+\delta_p)^\circ=W^N_p\phi(p)+\delta_p$$ or $W^N_p\phi(p)=6$ and $W^N_p\phi(v)<6$ for any $v\in n(p)\cap\Gamma_m$ and  $$(\phi+\delta_p)^\circ=T_p(W^N_p\phi(p)+\delta_p).$$

For more details about waves see \cite{us}.

 \begin{figure}[t]
    \centering
    \setlength\fboxsep{0pt}
    \setlength\fboxrule{0.5pt}
    \includegraphics[width=\textwidth]{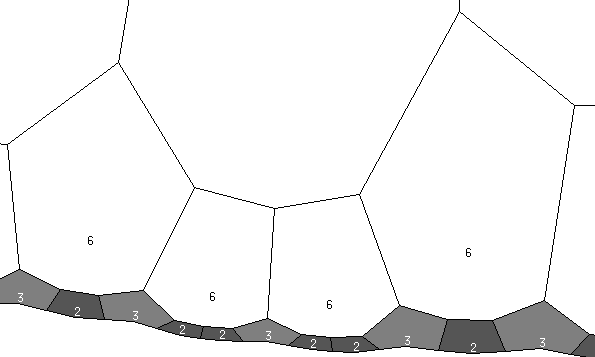}\ 

    \caption{A plot of the state $W_O(\phi_m)$ (see Definitions \ref{def_maximal},\ref{def_wave}) near the boundary of $\Gamma_5.$} 
    
    \label{fig_wave1}
\end{figure}  

Now we want to describe the wave action at $O$ applied to the maximal stable state $\phi_m.$ Since every point $v\in\Gamma_m$ can be connected with $O$ by a chain of adjacent cites and all the cites are maximal for $\phi_m,$ the state $W_O\phi_m$ is the result of applying a toppling to $\phi_m$ at every point of $\Gamma_m.$ By counting the incoming and outgoing grains at each vertex we conclude that $W_O\phi_m$ is equal to $6$ on $\Gamma_{m-1},$ to $2$ on $\Gamma_m^1$ and to $3$ on $\Gamma_m^2$ (see Figure \ref{fig_wave1}).

  The main observation of this paper is that the result of applying of the second wave coincides with the result of applying a single wave on a smaller domain, i.e. $$W_O^2\phi_m(v)=W_O\phi_{m-1}(v)\ \text{ for any }v\in\Gamma_{m-1}.$$  Indeed, applying a toppling to $W_{O}\phi_m$ at every vertex of $\Gamma_{m-1}$ we get a stable state which is equal to $6$ on $\Gamma_{m-2},$ to $2$ on $\Gamma_{m-1}^1,$ to $3$ on $\Gamma_{m-1}^2\cup\Gamma_{m}^1$ and to $5$ on $\Gamma_m^2$ (see Figure \ref{fig_wave2}).

This motivates the definition of the universal family $\alpha_s$. 
\begin{definition}\label{def_univ}
If $s>0$ then $\alpha_s$ is equal to 
\begin{itemize}
\item $6$ on $\Gamma_{s-1},$ 
\item $5$ on $\Gamma^2_m$ for $m>s,$ 
\item $3$ on $\Gamma^2_s$ and $\Gamma^1_m$ for $m>s$ and 
\item $2$ on $\Gamma^1_s.$
\end{itemize} 
The function $\alpha_0$ is defined to be $5$ on $\Gamma^2_m$ and $3$ on $\Gamma^1_m$ for $m>0$ and $-1$ on $\Gamma_{0}.$  
\end{definition}

 \begin{figure}[t]
    \centering
    \setlength\fboxsep{0pt}
    \setlength\fboxrule{0.5pt}
    \includegraphics[width=\textwidth]{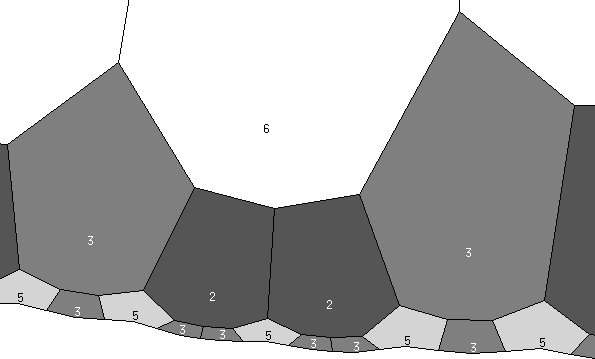}\ 

    \caption{A plot of the state $W^2_O\phi_m$ near the same piece of the boundary as on Figure \ref{fig_wave1}.} 
    
    \label{fig_wave2}
\end{figure}

With this definition and the complete description of the wave action we deduce the following 
\begin{lemma}\label{lem_main}
Consider a point $p\in\Gamma_0.$ Then $\beta^{\{p\}}_m=\alpha_{L(p)}+\delta_p$ and $$\mathcal{T}_{\phi_m^{\{p\}}}(v)=\min(m+1-L(p),m+1-L(v)).$$
\end{lemma}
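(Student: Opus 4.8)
The plan is to prove Lemma~\ref{lem_main} by iterating the wave action starting from the maximal stable state and tracking exactly how many topplings occur at each vertex. The key structural input is the wave-decomposition of relaxation stated just above the lemma: since $\phi_m^{\{p\}} = \phi_m + \delta_p$ and every vertex of $\Gamma_m$ carries the maximal value $6$, adding a single grain at $p$ and relaxing is the same as repeatedly applying the wave operator $W_p$ until the value at $p$ drops below $6$. So the first step is to count how many waves are needed and to record, wave by wave, which vertices topple.

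\emph{First I would} handle the special case $p=O$ explicitly, since the text has already computed $W_O\phi_m$ (equal to $6$ on $\Gamma_{m-1}$, $2$ on $\Gamma_m^1$, $3$ on $\Gamma_m^2$) and the key ``main observation'' $W_O^2\phi_m(v)=W_O\phi_{m-1}(v)$ for $v\in\Gamma_{m-1}$. The crucial point is that each full wave $W_O$ is realized by toppling \emph{every} vertex of a shrinking ball exactly once: the first wave topples all of $\Gamma_m$, and the computed self-similarity shows the second wave topples exactly $\Gamma_{m-1}$, the third topples $\Gamma_{m-2}$, and so on. Formally I would prove by induction on $k$ that the state after $k$ waves, $W_O^k\phi_m$, agrees with $W_O\phi_{m-k+1}$ on $\Gamma_{m-k+1}$ and has already stabilized outside, so that the $k$-th wave topples precisely the vertices of $\Gamma_{m-k+1}$. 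The wave process halts after $m+1$ waves, when the ball has shrunk to the single point $O$ and the value there finally falls below $6$.

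\emph{Then} the odometer count follows by bookkeeping: a vertex $v$ with $L(v)=\ell$ is toppled in wave $k$ precisely when $v\in\Gamma_{m-k+1}$, i.e. when $m-k+1\ge \ell$, equivalently $k\le m+1-\ell$. Summing one toppling per qualifying wave gives $\mathcal{T}_{\phi_m^{\{O\}}}(v)=m+1-L(v)$, which is the stated formula $\min(m+1-L(O),\,m+1-L(v))$ since $L(O)=0$ is the minimum. The resulting stable state on $\Gamma_m\setminus\{O\}$ is read off from the per-wave contributions described in Definition~\ref{def_univ}: a vertex in $\Gamma_m^2$ that survives many waves ends at $5$, one in a first-type stratum ends at $3$, boundary strata at $2$ or $3$, exactly matching $\alpha_{L(p)}=\alpha_0$; the single grain $\delta_p$ at $p=O$ accounts for the extra unit there, giving $\beta_m^{\{O\}}=\alpha_0+\delta_O$.

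\emph{The main obstacle} is that Lemma~\ref{lem_main} as stated allows an arbitrary $p\in\Gamma_0$, but $\Gamma_0=\{O\}$, so the general-$p$ case of this particular lemma is vacuous beyond $p=O$; the genuine work is entirely in making the self-similar wave recursion rigorous. The delicate part is verifying that each wave really does topple every vertex of the current ball exactly once and nothing outside it---this requires checking that after the grains redistribute, the boundary values ($2$, $3$, $5$) are all strictly below $7$ so no further topplings leak outward, and that the interior returns precisely to the maximal configuration so the next wave can propagate through it. I expect the careful induction establishing $W_O^k\phi_m|_{\Gamma_{m-k+1}}=W_O\phi_{m-k+1}$, together with confirming the process terminates in exactly $m+1$ waves, to be where all the care is needed; the extension to arbitrary $P$ with $L(P)=s$ (which yields Theorem~\ref{th_thefunction}) is then obtained separately by the same wave analysis centered at an appropriate point, using that extra points of $P$ lie in the already-relaxed region and create no new instability, as anticipated in the discussion following Proposition~\ref{prop_mass}.
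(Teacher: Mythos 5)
Your proposal follows essentially the same route as the paper's own proof: decompose the relaxation of $\phi_m+\delta_p$ into waves, establish by induction that $W_O^k\phi_m$ agrees with $\alpha_{m-k+1}$ (so the $k$-th wave topples exactly the vertices of $\Gamma_{m-k+1}$, once each), and read off the odometer and the final state by bookkeeping. For $p=O$ your argument is correct and complete in outline. One caution, though: you treat the restriction $p\in\Gamma_0$ as making the general case vacuous, but this is almost certainly a typo for $p\in\Gamma_m$ --- the statement involves $L(p)$ and $\alpha_{L(p)}$ nontrivially, and the proof of Theorem \ref{th_thefunction} applies the lemma to a point $p_0$ with $L(p_0)=l$ arbitrary. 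The genuinely non-vacuous content is exactly the case $L(p)>0$, which is the only case where the $\min$ in the odometer formula is ever achieved by its first argument. The paper handles it by noting that $W_p^{k}\phi_m=W_O^{k}\phi_m$ as long as $p$ remains maximal (i.e.\ $p\in\Gamma_{m-k}$), and that the process terminates after exactly $m+1-L(p)$ waves because $p$ then drops out of the maximal core $\Gamma_{m-k}$ of $\alpha_{m-k+1}$. Your closing sketch conflates this single-point extension with the separate passage from one point to a configuration $P$ (the latter is the content of the deduction of Theorem \ref{th_thefunction}, using that the extra points of $P$ land in the already-subcritical region); you should spell out the $L(p)>0$ termination argument explicitly, since that is where the two-argument $\min$ actually comes from.
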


\begin{proof}
We decompose the relaxation process for $\phi_m+\delta_p$ into a sequence of waves actions applied to the maximal stable state. From the structure of the wave action it follows that the state $W_0^k\phi_m$ coincides with $\alpha_{m-k+1}$ restricted to $\Gamma_m.$ In particular,  $W_0^k\phi_m$  is maximal on $\Gamma_{m-k}$ and is equal to $2$ or $3$ on $\Gamma^\bullet_{m-k+1}=\Gamma^\bullet_{m-k+1}\backslash \Gamma^\bullet_{m-k}.$ Indeed, applying a toppling to $W_0^k\phi_m$ at each point of $\Gamma_{m-k}$ we get a stable state which coincides with $W_0^{k+1}\phi_m.$ 

A point $p$ is maximal for $W_0^k\phi_m$ if $k<m-L(p).$ This implies that $W_0^{k+1}\phi_m=W_p^{k+1}\phi_m.$ Using the explicit description of the wave action, we see that $p$ is not maximal for $W_p^{m-L(p)}\phi_m$ and $\alpha_{L(p)}+\delta_p$ coincides with $\beta_m^{\{p\}}$ on $\Gamma_m.$ Combining these observations, we conclude that the toppling function for $\alpha_m+\delta_p$ is equal to $m+1-L(p)$ on $\Gamma_{L(p)}$ and $m-L(p)-k$ on $\Gamma^\bullet_{L(p)+k}.$
\end{proof} 

\begin{proof}[Proof of Theorem \ref{th_thefunction} and Proposition \ref{prop_thestate}] \label{proof_th_thefunction} Consider a configuration of vertices $P\subset \Gamma_m.$ Let $p_0\in P$ be a minimizer of $$l=\min_{p\in P} L(p),$$ i.e. $L(p_0)=l.$ 
By Lemma \ref{lem_main} the state $(\phi_m+\delta_{p_0})^\circ$ equals to $6$ exactly on $\Gamma_{l-1}$ and therefore the state $$(\phi_m+\delta_{p_0})^\circ+\sum_{p\in P,p\neq p_0}{\delta_p}$$ is stable and is equal to $\beta^P_m.$ 

\end{proof}

In particular, we see that some amount of sand is lost only after sending a first wave. We can compute the exact loss using Lemma \ref{lem_fibab}. A vertex of the first type on the boundary looses $4$ grains of sand and a vertex of the second type looses $3$ grains. Therefore, the total loss is equal to $4a_m+3b_m=7(4u_{2m-1}+3u_{2m-2})=7(2u_{2m}+u_{2m+1})=7(u_{2m}+u_{2m+2}).$

\begin{proof}[Proof of Proposition \ref{prop_mass}]\label{proof_prop_mass} It is well known that $$u_m\sim {1\over{\sqrt 5}} \rho^m\text{, where\ }\rho={1\over 2}(1+\sqrt{5}).$$
Then applying Lemma \ref{lem_total}
$${ {M\phi_m^P-M\psi_m^P}\over{|\Gamma_m|}}\sim{7{\rho^{2m}(1+\rho^2)}\over {7\rho^{2m+1}}}=\rho^{-1}+\rho=\sqrt{5}.$$
\end{proof}

\section{Discussion}\label{discussion}
The problem that we studied in this paper (sandpile on a ``hyperbolic lattice'') is similar to the one discussed in \cite{us} (sandpile on the two dimensional Euclidean lattice). For Euclidean case, the basic observation is that {\it the amount of lost sand} is {\it small} and therefore the resulting state is close to the maximal stable state. In fact, the Euclidean analogs of the states $\beta_m^P$ (see Definition~\ref{def_maximal}) coincide with the maximal stable state {\it almost everywhere.} In the {\it scaling limit}, the locus of vertices where such a state deviates from the maximum, looks like a thin graph (or more precisely, a tropical curve) passing through the perturbation points. In particular, the result of the relaxation strongly depends on the position of points $P.$ Surprisingly, the situation is very different in the hyperbolic case.

The key ingredient in \cite{us} is the description of the discrete harmonic functions with sublinear growth on large subsets of the lattice $\mathbb Z^2$. Also, instead of considering only sets like $\Gamma_n$ we could consider polygonal subsets of $\mathbb Z^2$, it was important that the boundary of such subsets if defined by sets of zeroes of discrete harmonic functions. We do not know much about discrete harmonic functions on $\Gamma$, therefore we had only one type of subsets of $\Gamma$ to consider. 

\subsection{Scalings}\label{sec_scaling}
 Here we propose a possible procedure to obtain a scaling limit in the hyperbolic case. Consider a family of graphs $\Gamma[n],$ $n\in\mathbb{N}$ embedded to $\mathbb{H}^2$ and satisfying the following properties. The graph $\Gamma=\Gamma[1]$ is isohedral (i.e. tile-transitive, all the faces are congruent). The graph $\Gamma[n+1]$ is a result of a finite subdivision of each cell of $\Gamma[n],$ moreover we ask this subdivision to be invariant with respect to the group of symmetries of $\Gamma[n].$ The set $V(\Gamma[n])$ of vertices of $\Gamma[n]$ is dense in $\mathbb{H}^2$ when $n$ goes to infinity, i.e. $\lim_{n\rightarrow \infty}\mathrm{dist}(V(\Gamma[n]),p)=0,$ for any $p\in\mathbb{H}^2.$ 

Consider a geodesic-convex set $\Omega\subset\mathbb{H}^2$ and collection of points $P\subset\Omega^\circ.$ For simplicity assume that $P\subset\Gamma[N]$ for some $N>0.$ Let $\phi_m$ be the maximal stable state on $V(\Gamma[m])\cap\Omega.$ Suppose that $m\geq N,$ consider the perturbation $\beta_m=(\phi_m+\sum_{p\in P}\delta_p)^\circ$ of the maximal stable. We would like to know how the limiting shape looks like.  For example, let $C_m$ be the subset of $V(\Gamma[m])\cap\Omega$ where $\beta_m$ is not maximal. What is the Hausdorff limit  (if it exists)  of $C_m$ in $\Omega?$ 

\subsection{Boundaries}\label{sec_boundaries} Here we propose a hyperbolic analog of the lattice polygons in $\ZZ^2$ with sides of rational slope.
Let $G$ be a subgroup of $Aut(\mathbb{H}^2)$ consisting of all symmetries of the embedding of $\Gamma$ to $\mathbb{H}^2.$ A geodesic $l\subset\mathbb{H}^2$ is called $\Gamma$-rational if there exist a point $p\in l$ and an element $A\in\G$ such that $p\neq Ap$ and $l$ passes through $Ap.$ 

It might be useful to restrict to the specific types of $\Omega.$ For example in \cite{us} the case of polytope with rational edges plays an important role and in this case the results become simpler. Natural analogues of such sets would be finite intersections of hyperbolic half-planes whose boundaries are $\Gamma$-rational geodesics.

\subsection{Abstract graphs}
Finally we would like to raise the main question of our paper in the case of general symmetric hyperbolic graphs. Let $G$ be a finetely generated group and $S$ be a chosen finite set of its generators. For a subgroup $H$ of $G$ we construct a graph $\Gamma$ with the set of vertices equal to $G/H.$ Two cosets $\alpha,\beta\in G/H$ are connected with an edge in this graph if the exist $s\in S$ such that $\beta=s\alpha.$ This graph is known as {\it relative Cayley graph} (see \cite{kapovich2002geometry}). Scaling procedure and polygonal-type boundaries can be defined as in Sections~\ref{sec_scaling}, \ref{sec_boundaries}.

Let $L\colon V(\Gamma)\rightarrow\mathbb{Z}_{\geq 0}$ be the function measuring the distance from $H\in\Gamma,$ i.e. $L(\alpha)=\mathrm{dist}_{\Gamma}(\alpha,H).$ Consider $\Gamma_m=L^{-1}([0,m]),$ a finite subset $P$ of $\Gamma_m$ and $\phi_m$ be the maximal stable state on $\Gamma_m.$ Is it possible to describe the relaxation for $\phi_m+\sum_{p\in P}\delta_p$ in the case of hyperbolic $\Gamma$ (see \cite{kapovich2002geometry})?
  
\bibliography{../sandbib}
\bibliographystyle{abbrv}

\end{document}